\newcommand{\dd}{\mathrm{d}}
\newcommand{\E}{\mathbb{E}}
\newcommand{\1}{\textbf{1}}
\newcommand{\R}{\mathbb{R}}
\newcommand{\C}{\mathbb{C}}
\newcommand{\p}[1]{\mathbb{P}\left( #1 \right)}
\newcommand{\scal}[2]{\left\langle #1, #2 \right\rangle}
\newtheorem{theorem}{Theorem}
\newtheorem{lemma}[theorem]{Lemma}
\theoremstyle{remark}
\newtheorem{remark}[theorem]{Remark}
\theoremstyle{definition}
\title{\vspace{-3em}
Khinchin inequalities for uniforms on spheres with a deficit
}
\author{Jacek Jakimiuk$^\dagger$, \ Colin Tang$^*$, \
Tomasz Tkocz\footnote{Email: ttkocz@math.cmu.edu. Research supported in part by NSF grant DMS-2246484.}
}
\date{\begin{normalsize}
\emph{$^\dagger$Institute of Mathematics,
University of Warsaw, 02-097 Warsaw, Poland.} \\\vspace*{0.7em}
\emph{$^*$Department of Mathematical Sciences, Carnegie Mellon University, Pittsburgh, PA 15213, USA}
\end{normalsize}}
\begin{document}

\maketitle

\begin{abstract}
We sharpen the moment comparison inequalities with sharp constants for sums of random vectors uniform on Euclidean spheres, providing a deficit term (optimal in high dimensions). 
\end{abstract}

\bigskip

\begin{footnotesize}
\noindent {\em 2020 Mathematics Subject Classification.} Primary 60E15; Secondary 26D15.

\noindent {\em Key words. Khinchin inequality, moment comparison, stability, sums of independent random vectors uniform on spheres} 
\end{footnotesize}

\bigskip

\section{Introduction}

Sharp moment comparison inequalities (a.k.a. of the Khinchin-type, \cite{Kh}) have been extensively studied (see \cite{BC, BMNO, BN, CGT, CKT, Eitan, ENT-GM, ENT-Bpn, Haa, HNT, HT, Ko, KK, KLO, LO-best, LO, MRTT, NO, New, koles, Sz}), nonetheless the investigation of their stability presently appears to be in its nascent stages and has been focused so far only on the Rademacher sums (see \cite{BMNO, DDS, J}), as constituting, arguably, the most fundamental case. This note makes a first step towards widening the scope of this investigation and is devoted to sharpening sharp moment comparison inequalities for sums of random vectors uniform on Euclidean spheres, which provide a natural compelling generalisation of the Rademacher distribution to Euclidean space.

\subsection{New results}

Cutting to the chase, our main results read as follows. (We work in $\R^d$, equipped with the standard inner product $\scal{x}{y} = \sum_{j=1}^n x_jy_j$, $x, y \in \R^d$, and the endowed Euclidean norm $|x| = \sqrt{\scal{x}{x}}$.)

\begin{theorem}\label{thm:main}
Let $d \geq 2$ be a fixed dimension, let $\xi_1, \xi_2, \dots$ be independent random vectors uniform on the unit Euclidean sphere $S^{d-1}$ in $\R^d$ and let $Z$ be a Gaussian random vector in $\R^d$ with mean $0$ and covariance $\frac{1}{d}I_d$. Let $p \geq 2$. For every $n \geq 1$, every real scalars $a_1, \dots, a_n$ with $\sum_{j=1}^n a_j^2 = 1$, we have
\begin{equation}\label{eq:LpL2-deficit}
\E\left|\sum_{j=1}^n a_j\xi_j\right|^p \leq \E|Z|^p - c_{p,d}\sum_{j=1}^n a_j^4,
\end{equation}
where
\[ 
c_{p,d} = \frac{(p+d-2)(p+d-4)}{24d^2(d+2)}\cdot \begin{cases} 3p(p-2), & 2 \leq p \leq 4, \\ 1, & p > 4. \end{cases}
 \]
\end{theorem}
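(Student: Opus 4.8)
The plan is to reduce the $d$-dimensional problem to a one-dimensional moment comparison with a deficit, which is the classical strategy for such sphere inequalities. First I would note that $S = \sum_{j=1}^n a_j \xi_j$ has the same distribution as $R \theta$ where $\theta$ is uniform on $S^{d-1}$ and $R = |S|$ is independent of $\theta$ — no wait, that is not true since $S$ is not rotationally invariant in general. Let me reconsider: the correct reduction uses the fact that for a fixed unit vector $u$, the projection $\langle \xi_j, u\rangle$ has the distribution of the first coordinate of a uniform point on $S^{d-1}$, and more importantly one integrates $|S|^p$ against a one-dimensional representation. The cleanest route is via the subordination $\xi_j \stackrel{d}{=} |g_j|^{?}$... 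Actually the standard trick is: $\E|S|^p = \E\left[ \E\left[ |S|^p \mid |S| \right]\right]$ is useless; instead write $|x|^p = \gamma_{p,d}\int_{S^{d-1}} |\langle x, u\rangle|^p \, d\sigma(u)^{-1}$... that identity fails too.

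**The genuinely standard reduction.**

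The right reduction, which I would carry out, is the one from the Khinchin literature for spheres (e.g.\ K\"onig--Kwapie\'n, or the references \cite{Eitan, ENT-GM}): one uses the \emph{Archimedes / Funk--Hecke} type identity that expresses $\E|S|^p$ as an expectation of a one-dimensional functional of the $a_j$'s. Concretely, if $\varepsilon_1,\dots,\varepsilon_n$ are i.i.d.\ signs and, independently, $B$ is a Beta-distributed random variable governing the radial marginal on the sphere (so that $\langle \xi_j, u\rangle \stackrel{d}{=} \sqrt{B_j}\,\varepsilon_j$ with the appropriate parameters, and the joint law of $(\langle \xi_j, u\rangle)_j$ is handled by conditioning on a single extra uniform direction), then $\E|S|^p$ can be written as $\E_g\, \E_{\text{signs}} \big| \sum a_j c_j \big|^p$ for suitable auxiliary random variables, reducing \eqref{eq:LpL2-deficit} to a \emph{scalar} Khinchin inequality with deficit of the form $\E|\sum a_j X_j|^p \le m_p - c \sum a_j^4$ where $X_j$ are i.i.d.\ copies of the one-dimensional sphere marginal. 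I would then prove that scalar statement directly.

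**Proving the scalar statement with deficit.**

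For the scalar inequality I would use the second-order Taylor expansion at the Gaussian. Writing $f(t) = \E|\sqrt{t}\, X_1 + \sqrt{1-t}\,G|^p$ interpolating from the sphere-sum to a Gaussian (a Lindeberg/semigroup swap), one gets $m_p - \E|\sum a_j X_j|^p = \sum_j$ (contribution of swapping $X_j$ for its Gaussian analogue), and each contribution is, by Taylor with integral remainder and matching of first and second moments, comparable to $a_j^4 \cdot (\text{fourth-cumulant defect of }X_1) \cdot \inf_s (\text{value of a }(p-4)\text{-type moment of the partial sum})$. The sign of the fourth cumulant of the one-dimensional sphere marginal (which is negative — it is a bounded, platykurtic variable) is what produces the \emph{correct-direction} deficit, and the constant $\frac{(p+d-2)(p+d-4)}{24 d^2(d+2)}$ should emerge precisely from $\frac1{24}\binom{p}{?}$-type coefficients times this fourth cumulant, explaining the $(p+d-2)(p+d-4)$ numerator and the $d^2(d+2)$ denominator (the latter being $\E\langle\xi,u\rangle^2 \cdot$ a normalization). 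The case split at $p=4$ is the familiar one: for $p>4$ the function $t\mapsto |t|^p$ has a convex-enough second derivative that a clean lower bound on the remainder holds with constant $1$, whereas for $2\le p\le 4$ one loses the pointwise control of the fourth derivative and must instead bound $|t|^p$ below by its second-order Taylor polynomial plus $c\,|t|^4$-type corrections only after an extra convexity estimate, which costs the factor $3p(p-2)$.

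**Main obstacle.**

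I expect the main obstacle to be making the deficit \emph{uniform in $n$} and in the $a_j$'s with the stated explicit constant: the naive Taylor-remainder bound gives a deficit proportional to $\sum_j a_j^4 \cdot \E|S^{(j)} + \text{small}|^{p-4}$ where $S^{(j)}$ is a leave-one-out partial sum, and one must lower-bound $\E|S^{(j)}|^{p-4}$ by a dimensional constant independent of how the mass is distributed among the $a_j$ — in the degenerate case $a_1 = 1$ the leave-one-out sum is $0$ and $\E|S^{(j)}|^{p-4}$ could vanish for $p<4$, so the argument must be arranged (e.g.\ by symmetrization with the independent Gaussian block, which is never degenerate) so that the relevant partial sum always retains a Gaussian component of controlled size. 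Handling this degeneracy cleanly, and checking that the resulting constant is exactly $c_{p,d}$ rather than merely of that order, is the delicate part; verifying optimality in high dimensions (the claim in the abstract) is then a matter of expanding $c_{p,d}$ as $d\to\infty$ against the known sharp constant in the Gaussian CLT-type bound.
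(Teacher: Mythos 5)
Your high-level strategy (Lindeberg swap plus a second-order Taylor remainder) matches the paper's, but the route you propose has a genuine unfilled gap and also diverges from what the paper actually does. The paper never reduces to the scalar one-dimensional marginal; it works directly in the vector setting, writing $g(t)=\E|v+\sqrt{t}\,\xi|^p$ and computing $g''$ via the divergence theorem and the bi-Laplacian $\Delta\Delta|x|^p$ (Lemma~\ref{lm:2nd-derivative}), which produces the factor $p(p-2)(p+d-2)(p+d-4)$ cleanly without ever invoking the Funk--Hecke identity or fourth cumulants of the scalar marginal. Your proposed scalar reduction via $|x|^p=c_{p,d}\int_{S^{d-1}}|\scal{x}{u}|^p\,\dd\sigma(u)$ is in principle viable, but you neither fix the constants nor carry the Taylor estimate through, and you flag uncertainty about the identity itself.

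The real gap is in what you call the main obstacle. You correctly observe that for $p>4$ the lower bound from the swap degenerates when the mass concentrates on one coordinate, say $a_1\to 1$: the remainder for the $k=1$ swap is multiplied by $\E|v_1|^{p-4}$ with $v_1=\sum_{j>1}a_j\xi_j\to 0$, while the other swaps carry weights $a_k^4\to 0$, so the total deficit from the Lindeberg chain tends to $0$ even though the target deficit is $\Theta(1)$. Your suggested fix --- ``symmetrization with the independent Gaussian block, which is never degenerate'' --- does not resolve this: the Gaussian block appearing in the partial sums $v_k$ of the swap also has total weight $\sum_{j<k}a_j^2\to 0$ in this regime, so it does not supply a component of ``controlled size.'' The paper circumvents this entirely by a case split on $\|a\|_\infty$: when $\|a\|_\infty$ exceeds an explicit threshold $m_p$, it invokes the Schur concavity of $x\mapsto \E|\sum\sqrt{x_j}\,\xi_j|^p$ (from Baernstein--Culverhouse) to majorize $(a_j^2)$ by the non-degenerate profile $(m_p^2,\frac{1-m_p^2}{n-1},\dots)$, and then applies the non-degenerate case to that profile, yielding a constant deficit $\tfrac12\kappa_{p,d}m_p^4\ge c_{p,d}\|a\|_4^4$ since $\|a\|_4\le 1$. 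This Schur-concavity step is a necessary additional idea that your sketch is missing, and without it (or an equivalent device) the Lindeberg argument alone cannot close the $p>4$ case.

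A secondary gap: even in the non-degenerate case the paper needs a lower bound on $\E|v_k|^{p-4}$ uniform in the conditioning, which it gets from a Khinchin-type lower bound (Lemma~\ref{lm:Khin}) with an explicit universal constant $c_{\mathrm{Kh}}=0.77$, and in the case $2\le p\le 4$ it instead uses Jensen together with a careful bound on the Lagrange-remainder argument $\eta_{|Z|^2}\le\max\{1,|Z|^2\}$. These quantitative steps are where the exact constant $c_{p,d}$ and the factor $3p(p-2)$ actually come from; your sketch predicts their \emph{shape} but does not derive them.
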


\emph{Remark.}
For a fixed $p$, as $d \to \infty$,  we have $c_{p,d} = \Theta_p(1/d)$. This is best possible, since the special case $n=1$ gives the bound
\[ 
c_{p,d} \leq \E|Z|^p - 1 = \frac{\Gamma(\frac{p+d}{2})}{(\frac{d}{2})^{p/2}\Gamma(\frac{d}{2})}-1 = O_p(1/d),
 \]
as can be checked for instance by a direct calculation using that $d\cdot |Z|^2$ follows the chi-squared distribution $\chi^2(d)$ with $d$-degrees of freedom which has density $\frac{1}{2^{d/2}\Gamma(d/2)}x^{d/2-1}e^{-x/2}$ on $(0, +\infty)$.

We emphasise that the Gaussian distribution is normalised so that $\E|Z|^2 = 1 = \E|\xi|^2$, that is there is equality when $p=2$, so necessarily $c_{2,d} = 0$. When $d=1$, the $\xi_j$ are Rademacher random variables, that is random signs uniform on $\{-1, 1\}$ and the $Z_j$ are standard Gaussian random variables $N(0,1)$, and when $d=2$, the $\xi_j$ are Steinhaus random variables (upon the usual identification $\R^2 \simeq \C$), both distributions playing a pivotal role in Banach space theory, see \cite{Ver, LT, Woj}.

For coefficient vectors $a = (a_1, \dots, a_n)$ of a \emph{fixed} length $n$, we offer the following stability result.

\begin{theorem}\label{thm:main-diag}
Let $d \geq 2$ be a fixed dimension, let $\xi_1, \xi_2, \dots$ be independent random vectors uniform on the unit Euclidean sphere $S^{d-1}$ in $\R^d$. Let $p \geq 2$. For every $n \geq 2$, every real scalars $a_1, \dots, a_n$ with $\sum_{j=1}^n a_j^2 = 1$, we have
\begin{equation}\label{eq:LpL2-deficit-diag}
\E\left|\sum_{j=1}^n a_j\xi_j\right|^p \leq \E\left|\sum_{j=1}^n \frac{1}{\sqrt{n}}\xi_j\right|^p - \tilde{c}_{p,d}\sum_{j=1}^n \left(\frac{1}{n}-a_j^2\right)^2,
\end{equation}
where
\[ 
\tilde{c}_{p,d} = \frac{p(p-2)}{4d}\begin{cases}
\frac{(p+d-2)(p+d-4)}{d(d+2)}, & 2 \leq p \leq 4, \\ 0.385, & p > 4. \end{cases}
 \]
\end{theorem}

%

\subsection{Previous work}

Inequalities \eqref{eq:LpL2-deficit} and \eqref{eq:LpL2-deficit-diag} have been previously proved without the deficit terms $O_{p,d}(\sum_{j=1}^n a_j^4)$ and $O_{p,d}(\sum_{j=1}^n (1/n-a_j^2)^2)$ respectively, by K\"onig and Kwapie\'n in \cite{KK}, and, independently, by Baernstein II and Culverhouse in \cite{BC}, who de facto established a more general convexity-type result which in particular asserts that the function
$
(x_1, \dots, x_n) \mapsto \E\left|\sqrt{x_j}\xi_j\right|^p
$
is Schur-concave on $\R_+^n$ when $p \geq 2$ (see \cite{BC} for background and details).
The inequality is often restated equivalently in a homogeneous form as the sharp $L_p-L_2$ moment comparison inequality,
\begin{equation}\label{eq:LpL2}
\E\left|\sum_{j=1}^n a_j\xi_j\right|^p \leq (\E|Z|^p)\left(\E\left|\sum_{j=1}^n a_j\xi_j\right|^2\right)^{p/2}, \qquad p \geq 2,
 \end{equation}
for all $a_1, \dots, a_n \in \R$. The multiplicative constant $\E|Z|^p$ is sharp, as follows from the case $a_1 = \dots = a_n = \frac{1}{\sqrt{n}}$, $n \to \infty$ in view of the central limit theorem. 

As hinted earlier, there are few stability results, and only for Rademacher sums, that is in the case $d=1$, when the $\xi_j$ are i.i.d. uniform random signs. In this classical setting, inequalities \eqref{eq:LpL2-deficit} and \eqref{eq:LpL2-deficit-diag} are known to hold for all $p \geq 3$, as has been recently established by Jakimiuk in \cite{J}, albeit with a worse dependence on $p$ of the constant $c_{p,1}$ for large $p$; see also Corollary 1 in \cite{BMNO}, where this is derived as a by product of the sharp $L_p-L_4$ inequality for $p \geq 4$.  Prior to Jakimiuk's work, there had been one more stability result, viz. De, Diakonikolas and Servedio in \cite{DDS} found a deficit term in the celebrated Szarek's $L_1-L_2$ inequality from \cite{Sz} (see also \cite{ENT-res} for a different approach and explicit constants). This was paralleled in \cite{CNT, GTW, M3T} in the geometric context of stability results for maximal volume sections of  $\ell_p$-balls, polydisc and simplex, respectively (which themselves can be viewed as the moment comparison inequalities but in ``$L_{-1}$''), and has found interesting applications, see \cite{ENT-res, MR}. In a different spirit, ``distributional stability'' has been recently investigated in \cite{ENT-stab}.

We recommend, e.g. \cite{BMNO, J, NO} for further references on the pursuit of the sharp constants in the classical Khinchin inequality for random signs, as well as \cite{HT} for an account on what is known for other distributions, and \cite{CST} specifically for spherically symmetric random vectors.

It might be also worth making a point here that comparing our Theorem \ref{thm:main} valid for all $p \geq 2$ to Jakimiuk's result \cite{J} valid for $p \geq 3$ strongly attests to the paradigm that methods effective for Rademacher variables (the one-dimensional case $d=1$) often extend to a broader range of parameters in higher dimensions $d>1$. This perhaps goes back at least to Ullrich's works \cite{Ull1, Ull2} where he showed the equivalence between $L_0$ and $L_2$ norms  for Steinhaus random variables (case $d=2$), and emphasised that the analogous equivalence does not hold for Rademacher random variables (case $d=1$); another examples may include Hanner's inequality for many functions \cite{JT, Sch}, or optimal tail bounds \cite{BGH, CLT, NT, P1}.

\section{Proofs}

As in the statement of Theorems \ref{thm:main} and \ref{thm:main-diag}, throughout the rest of this paper, $\xi, \xi_1, \xi_2, \dots$ are independent identically distributed (i.i.d.) random vectors uniform on the unit sphere $S^{d-1} = \{x \in \R^d, \ |x| = 1\}$ in $\R^d$, and $Z, Z_1, Z_2, \dots$ are independent of them i.i.d. Gaussian random vectors in $\R^d$ with mean $0$ and covariance $\frac{1}{d}I_d$.

First, we focus solely on Theorem \ref{thm:main}. Second, having established and building on the important ideas and auxiliary lemmas, we shall prove Theorem \ref{thm:main-diag}.

\subsection{Overview}

At a high level, the main idea to tackle Theorem \ref{thm:main} is reminiscent of Lindeberg's swapping argument from his work \cite{Lin} on the central limit theorem, which has been widely used in a variety of contexts (see e.g. \cite{PR} for historical accounts), in particular for moment comparison inequalities (see, e.g. \cite{BN, CET, ENT-Bpn, HNT}). This has also been Jakimiuk's approach in \cite{J}.

Specifically, for $p > 0$, we define the deficit,
\begin{equation}\label{eq:def-deficit}
D_p(a, v) = \E|aZ + v|^p - \E|a\xi + v|^p, \qquad a \in \R, \ v \in \R^d.
\end{equation}
Suppose that it is nonnegative, $D_p(a,v) \geq 0$, for all $p \geq 2$, $a \in \R$ and $v \in \R^d$. Then, the proof of \eqref{eq:LpL2} goes by repeatedly swapping each $\xi_j$ with $Z_j$ (relying on the independence of the summands which allows in turn to condition on all but one summand that is being swapped),
\begin{align*}
\E\left|a_1\xi_1 + a_2\xi_2 + \dots + a_n\xi_n\right|^p &\leq \E\left|a_1Z_1 + a_2\xi_2 + \dots + a_n\xi_n\right|^p \leq \dots \\
&\leq \E\left|a_1Z_1 + a_2Z_2 + \dots + a_nZ_n\right|^p \\
&= \E|(a_1^2+\dots + a_n^2)^{1/2}Z|^p.
 \end{align*}
Now, to make some savings and obtain a deficit term in this bound, compellingly, we would like to sharpen the bound $D_p(a,v) \geq 0$.

This task brings us asking: why is $D_p(a,v)$ nonnegative (when $p \geq 2$)? Fix $a \in \R, v \in \R^d$ and consider the function
\[ 
h_{a, v}(t) = \E|v + at^{1/2}\xi|^p, \qquad t  > 0.
 \]
The heart of the matter in both \cite{BC} and \cite{KK} is the convexity of $h_{a,v}$ on $(0, +\infty)$, for then decomposing the distribution of $Z$ as $|Z|\xi$ (the magnitude and independent uniform direction), Jensen's inequality yields
\begin{align*}
\E|v + aZ|^p = \E|v + a\sqrt{|Z|^2}\xi|^p &=  \E_{|Z|} h_{a, v}(|Z|^2) \\
&\geq h_{a,v}(\E |Z|^2) = h_{a,v}(1) = \E|v + a\xi|^p,
 \end{align*}
which is $D_{p}(a,v) \geq 0$. 

As a side note, this argument is robust enough to treat arbitrary rotationally invariant random vectors whose magnitudes are comparable in the stochastic convex ordering, as done in \cite{KK}, as well as other functionals than just the moments, as in \cite{BC}.

\subsection{Main lemmas}

Our argument therefore begins with a derivation of an exact expression for the second derivative of functions $h_{a,v}$, amenable to quantitative improvements on their convexity. For greater transparency of the ensuing calculations, we follow \cite{BC} and treat arbitrary (smooth) functionals.

\begin{lemma}\label{lm:2nd-derivative}
Let $\Psi$ be a smooth function on $\R^d$. For a fixed vector $v \in \R^d$, define
\[ 
f(t) = \int_{S^{d-1}} \Psi\big(v + t^{1/2}x\big) \dd x, \qquad t > 0.
 \]
Then
\begin{align}
\label{eq:f'}
f'(t) &= \frac{1}{2}\int_{B_2^d}\Delta\Psi\big(v + t^{1/2}x\big) \dd x, \\\label{eq:f''}
f''(t) &= \frac{1}{4}\int_0^1r^{d+1}\left(\int_{B_2^d}\Delta\Delta\Psi\big(v + t^{1/2}rx\big) \dd x\right)\dd r.
\end{align}
\end{lemma}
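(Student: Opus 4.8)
The plan is to integrate by parts twice, converting surface integrals over $S^{d-1}$ into solid integrals over the ball $B_2^d$, using the divergence theorem (Green's identities) and the scaling structure of $f$. First I would rewrite $f$ as a solid integral via the coarea/polar formula: since $\int_{S^{d-1}}\Psi(v+t^{1/2}x)\,\dd x$ is the surface integral on the unit sphere, and $\int_{B_2^d} g = \int_0^1 r^{d-1}\bigl(\int_{S^{d-1}} g(r\omega)\,\dd\omega\bigr)\dd r$, I would aim to express $f'(t)$ directly. The cleanest route: differentiate under the integral sign, $f'(t) = \frac{1}{2}t^{-1/2}\int_{S^{d-1}}\scal{\nabla\Psi(v+t^{1/2}x)}{x}\,\dd x$. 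The integrand is the normal derivative of $x \mapsto \Psi(v+t^{1/2}x)$ on the unit sphere (since $x$ is the outer unit normal), and $\Delta_x[\Psi(v+t^{1/2}x)] = t\,\Delta\Psi(v+t^{1/2}x)$; so Green's identity $\int_{S^{d-1}}\partial_n u = \int_{B_2^d}\Delta u$ gives $f'(t) = \frac{1}{2}t^{-1/2}\cdot t^{-1}\cdot t \int_{B_2^d}\Delta\Psi(v+t^{1/2}x)\,\dd x$ — wait, I should track the powers of $t$ carefully: with $u(x)=\Psi(v+t^{1/2}x)$ one has $\nabla_x u = t^{1/2}\nabla\Psi$, $\Delta_x u = t\,\Delta\Psi$, and $\partial_n u|_{S^{d-1}} = t^{1/2}\scal{\nabla\Psi}{x}$; thus $\int_{S^{d-1}}\scal{\nabla\Psi}{x}\,\dd x = t^{-1/2}\int_{B_2^d}t\,\Delta\Psi\,\dd x$, and combining with the $\frac12 t^{-1/2}$ prefactor from differentiation yields exactly \eqref{eq:f'}.

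For \eqref{eq:f''}, I would apply the same mechanism to $f'(t) = \frac{1}{2}\int_{B_2^d}\Delta\Psi(v+t^{1/2}x)\,\dd x$. Writing this in polar coordinates as $\frac12\int_0^1 r^{d-1}\bigl(\int_{S^{d-1}}\Delta\Psi(v+t^{1/2}r\omega)\,\dd\omega\bigr)\dd r$, I can differentiate under both integral signs and, for each fixed $r$, repeat the Green's-identity computation above with $\Psi$ replaced by $\Delta\Psi$ and $t^{1/2}$ replaced by $t^{1/2}r$. That inner step turns $\frac{\dd}{\dd t}\int_{S^{d-1}}\Delta\Psi(v+t^{1/2}r\omega)\,\dd\omega$ into $\frac{r^2}{2}\int_{B_2^d}\Delta\Delta\Psi(v+t^{1/2}rx)\,\dd x$ by the identical power-of-$t$ bookkeeping (the factor $r^2$ arising because the argument is scaled by $r$, so the Laplacian in the Green identity produces $r^{-2}$ while the chain rule produces $r^2\cdot t\cdot\frac12 t^{-1/2}$, or more directly: replacing $t^{1/2}\rightsquigarrow t^{1/2}r$ means $t \rightsquigarrow t r^2$ inside, giving an extra $r^2$). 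Collecting the $r^{d-1}$ from the polar Jacobian with this $r^2$ gives the $r^{d+1}$ weight in \eqref{eq:f''}, and the two factors of $\frac12$ combine to $\frac14$.

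The main obstacle is not any single identity but the careful justification of differentiating under the integral sign twice and the repeated application of the divergence theorem: one must check that $\Psi$ being smooth (and, implicitly for the intended application, of at most polynomial growth, so that all integrals and their $t$-derivatives converge and dominated convergence applies) suffices, and one must be scrupulous about the powers of $t$ and $r$, since an error there propagates into the constant $c_{p,d}$. A clean way to sidestep some of the bookkeeping is the substitution $y = t^{1/2}x$ (so $f(t) = t^{-(d-1)/2}\int_{t^{1/2}S^{d-1}}\Psi(v+y)\,\dd\sigma(y)$ up to Jacobian factors) or, better, to prove the two formulas in the normalized form $g(s) = \int_{B_2^d}\Phi(v+s^{1/2}x)\,\dd x$ satisfies $g'(s) = \frac{1}{4}\int_0^1 r^{d+1}\int_{B_2^d}\Delta\Phi(v+s^{1/2}rx)\,\dd x\,\dd r$, and then apply it once with $\Phi = \Psi$ after establishing $f' = \frac12\int_{B_2^d}\Delta\Psi(\cdot)$, and once with $\Phi=\Delta\Psi$; this isolates the single genuine computation and makes the iteration transparent.
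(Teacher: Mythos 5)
Your proposal is correct and follows essentially the same route as the paper: differentiate under the integral sign, apply the divergence theorem to turn the sphere integral into a ball integral, then pass to polar coordinates and reuse the first step (observing that the inner sphere integral has the same form as $f$ with $t$ rescaled by $r^2$, which is precisely what produces the extra $r^2$ and hence the $r^{d+1}$ weight). Your power-of-$t$ and power-of-$r$ bookkeeping matches the paper's, and the remark about justifying differentiation under the integral sign is a legitimate (and, in the paper, tacitly handled) regularity point rather than a gap.
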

\begin{proof}
The approach we use was indicated in Remark 15 in \cite{CST} and is in the spirit of Lemma 4 from \cite{BC}. Plainly,
\[ 
f'(t) = \frac12t^{-1/2}\int_{S^{d-1}}\scal{\nabla \Psi\big(v + t^{1/2}x\big)}{x} \dd x.
 \]
Since $x$ is the outer-normal unit vector, the divergence theorem gives
\[ 
f'(t) = \frac12t^{-1/2}\int_{B_2^{d}} \text{div}_x\Big(\nabla \Psi\big(v + t^{1/2}x\big)\Big) \dd x =  \frac12\int_{B_2^{d}} \Delta\Psi\big(v + t^{1/2}x\big) \dd x.
 \]
To find the next derivative, note that using polar coordinates,
\[ 
f'(t) = \frac12\int_0^1r^{d-1}\left(\int_{S^{d-1}}  \Delta\Psi\big(v + t^{1/2}rx\big) \dd x\right)\dd r
 \]
and the point is that the integral over the sphere, as the function in $t$ is of the same form as $f$, with the variable rescaled  by $r^2$. Therefore, applying the previous calculation, we get
\[ 
f''(t) = \frac12\int_0^1 r^{d-1}\left(\frac12\int_{B_2^{d}}  \Delta\Delta\Psi\big(v + t^{1/2}rx\big)\cdot r^2 \dd x\right)\dd r.\qedhere
 \]
\end{proof}

The workhorse of our proof will be the following quantitative bound on the deficit introduced in \eqref{eq:def-deficit}.

\begin{lemma}\label{lm:Dp}
Let $p \geq 2$. For $a \in \R, v \in \R^d$, $d \geq 2$, we have
\[ 
D_p(a, v) \geq \kappa_{p,d}a^4\cdot\begin{cases}
(|v|^2+2a^2)^{\frac{p-4}{2}}, & 2 \leq p \leq 4, \\
|v|^{p-4}, & p > 4,
\end{cases}
 \]
where 
\[
\kappa_{p,d} = \frac{p(p-2)(p+d-2)(p+d-4)}{4d^2(d+2)}.
\]
\end{lemma}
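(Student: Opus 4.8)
The plan is to apply Lemma \ref{lm:2nd-derivative} with $\Psi(x) = |x|^p$, for which both $\Delta\Psi$ and $\Delta\Delta\Psi$ can be computed explicitly: since $|x|^p$ is radial, $\Delta|x|^p = p(p+d-2)|x|^{p-2}$ and iterating, $\Delta\Delta|x|^p = p(p-2)(p+d-2)(p+d-4)|x|^{p-4}$. Setting $h_{a,v}(t) = \E|v+at^{1/2}\xi|^p = \omega_{d-1}^{-1}\int_{S^{d-1}}|v+a t^{1/2}x|^p\dd x$ (up to the surface-area normalisation, which I will track carefully), formula \eqref{eq:f''} gives
\[
h_{a,v}''(t) = \frac{a^4 p(p-2)(p+d-2)(p+d-4)}{4\,\vol(B_2^d)}\int_0^1 r^{d+1}\!\!\int_{B_2^d}\!\big|v + a t^{1/2} r x\big|^{p-4}\dd x\,\dd r,
\]
so $h_{a,v}$ is convex and, more importantly, its second derivative is explicitly bounded below. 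Then, exactly as in the Overview, decomposing $Z = |Z|\xi$ and writing $D_p(a,v) = \E_{|Z|}\big[h_{a,v}(|Z|^2) - h_{a,v}(1)\big]$, I would use the integral form of Taylor's theorem (or, equivalently, the fact that for a convex function $g$ with $g''\ge m(t)$ one has $g(s) - g(1) - g'(1)(s-1) \ge \int$ of $m$ against the appropriate kernel) together with $\E(|Z|^2 - 1) = 0$ to reduce $D_p(a,v)$ to a double integral of $h_{a,v}''$ weighted by the law of $|Z|^2$.

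The next step is to bound the inner quantity $\int_0^1 r^{d+1}\int_{B_2^d}|v + s r x|^{p-4}\dd x\,\dd r$ from below, where $s = a t^{1/2} \ge 0$ ranges over the relevant values. For $p > 4$ the exponent $p-4$ is nonnegative, so $|v + srx|^{p-4}$ is a convex function of $x$ and its average over the symmetric ball $B_2^d$ is at least $|v|^{p-4}$ by Jensen (the barycentre of $B_2^d$ is the origin); combined with $\int_0^1 r^{d+1}\dd r = \frac{1}{d+2}$ and $\vol(B_2^d) = \omega_{d-1}/d$ this yields the clean constant $\kappa_{p,d}$ with the $|v|^{p-4}$ factor. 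For $2 \le p \le 4$ the exponent is negative, so instead I would bound $|v + srx|^{p-4} \ge (|v| + s|x|)^{p-4} \ge (|v| + s)^{p-4}$ using $|x| \le 1$ on the ball, and then note that along the Taylor reduction the relevant range of $s^2 = a^2 t$ is controlled: $t$ runs over (essentially) the support of $|Z|^2$ near $1$, and after reinserting the $|Z|$-average one encounters $|v|^2 + a^2 \cdot(\text{something})$; here the factor $2a^2$ in $(|v|^2 + 2a^2)^{(p-4)/2}$ should come out after bounding $a^2 t$ on the integration region and using $\E|Z|^2 = 1$ sharply. I expect a little care is needed to see that $2a^2$ (rather than some other multiple) is exactly what the argument delivers, but this is a matter of bookkeeping in the one-variable Taylor estimate.

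Finally I would assemble the pieces: the prefactor $\frac{p(p-2)(p+d-2)(p+d-4)}{4}$ from $\Delta\Delta|x|^p$, the geometric constant $\frac{1}{d(d+2)}$ from $\vol(B_2^d)^{-1}\int_0^1 r^{d+1}\dd r$ wait---more precisely $\frac{1}{\vol(B_2^d)}\cdot\frac{\vol(B_2^d)}{?}$; tracking the normalisations, $f$ in Lemma \ref{lm:2nd-derivative} is an unnormalised surface integral while $h_{a,v}$ carries $1/\omega_{d-1}$, and $\omega_{d-1} = d\,\vol(B_2^d)$, which is what produces the $d^2(d+2)$ in the denominator of $\kappa_{p,d}$. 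Multiplying through gives precisely $\kappa_{p,d} = \frac{p(p-2)(p+d-2)(p+d-4)}{4d^2(d+2)}$ times $a^4$ times the claimed $|v|$-dependent factor. The main obstacle, as noted, is the $2 \le p \le 4$ case: there the inner integrand blows up where $v + srx = 0$, so one cannot simply pull $|v|^{p-4}$ out; the fix is the pointwise lower bound $(|v|+s)^{p-4}$ combined with a sharp-as-possible control of the quantity $a^2 t$ that appears after integrating against the $\chi^2(d)/d$ law of $|Z|^2$ in the Taylor remainder — getting the constant $2$ right there is the one genuinely delicate point, everything else being the explicit Laplacian computation and routine normalisation bookkeeping.
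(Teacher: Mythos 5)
Your $p > 4$ case is essentially the paper's argument with a cosmetic variation: you lower-bound the ball average $\frac{1}{\vol(B_2^d)}\int_{B_2^d} |v + s x|^{p-4}\dd x$ by $|v|^{p-4}$ using convexity/Jensen on the ball, while the paper derives the same bound from the monotonicity of $t\mapsto \E|v+\sqrt{t}\xi|^{p-4}$ (an immediate consequence of formula \eqref{eq:f'}); both give the deterministic lower bound $g''(\eta) \geq \beta_{p,d}|v|^{p-4}$ and then plug $\E(|Z|^2-1)^2 = 2/d$ into the Taylor identity. That part is fine.

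The $2\leq p\leq 4$ case, which you yourself flag as the delicate point, has a real gap and your proposed route will not close it. Your pointwise bound $|v+srx|^{p-4} \geq (|v|+s|x|)^{p-4} \geq (|v|+s)^{p-4}$ is strictly weaker than what is needed: writing $(|v|+s)^{p-4} = \big((|v|+s)^2\big)^{(p-4)/2}$ and comparing $(|v|+s)^2 = |v|^2 + 2|v|s + s^2$ with $|v|^2 + s^2$, you lose a multiplicative factor as bad as $2^{(p-4)/2}$ (and after the later $|Z|$-average the loss is closer to $3^{(p-4)/2}$), so you cannot reach the stated prefactor $\kappa_{p,d}$ with a clean $(|v|^2+2a^2)^{(p-4)/2}$ term — some extra constant strictly less than one would remain. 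The paper avoids this by expanding $|v+r_1 r_2\sqrt{\eta}\xi|^{p-4} = (|v|^2 + 2r_1r_2\sqrt{\eta}\langle v,\xi\rangle + r_1^2r_2^2\eta)^{q}$ with $q=(p-4)/2\leq 0$ and applying Jensen to the convex map $x\mapsto x^q$ jointly over $(\xi,r_1,r_2)$: the cross term vanishes in expectation by the symmetry of $\xi$, and $\E[r_1^2r_2^2] = \frac{d}{d+4}$, giving $g''(\eta)\geq \beta_{p,d}(|v|^2 + \tfrac{d}{d+4}\eta)^q$ with no lost factor. The step you are vague about --- ``getting the constant $2$ right'' --- is then a second application of Jensen, this time over $|Z|$, with the tilted probability measure $\frac{(|Z|^2-1)^2}{2/d}\dd\mathbb{P}$, after bounding $\eta_{|Z|^2}\leq\max\{1,|Z|^2\}\leq 1+|Z|^2$ and using $\E\big[|Z|^2(|Z|^2-1)^2\big] = \frac{2(d+4)}{d^2}$ (a $\chi^2(d)$ moment computation). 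This is the maneuver that produces the constant $2$ in $(|v|^2+2)^q$, and it relies on the Jensen-over-$\xi$ bound being loss-free; your triangle-inequality substitute breaks that chain.

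\end{document}
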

\begin{proof}
If $a = 0$, there is equality. Otherwise, by homogeneity, we can assume without loss of generality that $a=1$. Fix a vector $v$ in $\R^d$ and consider the function
\[ 
g_v(t) = \E|v + \sqrt{t}\xi|^p, \qquad t > 0.
 \]
By the rotational invariance of the Gaussian distribution, $Z$ has the same distribution as $|Z|\xi$. Thus,
\[ 
D_p(1, v) = \E\Big[g_v(|Z|^2) - g_v(1)\Big].
 \]
Using Taylor's expansion with Lagrange's remainder, for $t > 0$, there is $\eta_t$ between $1$ and $t$ such that
\[ 
g_v(t) - g_v(1) = (t-1)g_v'(1) + \frac{1}{2}(t-1)^2g_v''(\eta_t).
 \]
As a result (recall $\E|Z|^2 = 1$),
\begin{equation}\label{eq:Dp}
    D_p(1, v) = \frac{1}{2}\E\Big[(|Z|^2-1)^2g_v''(\eta_{|Z|^2})\Big].
\end{equation}
An application of \eqref{eq:f''} of Lemma \ref{lm:2nd-derivative} with $\Psi(x) = |x|^p$ yields for every $\eta > 0$,
\[ 
g_v''(\eta) = \frac{p(p-2)(p+d-2)(p+d-4)}{4|S^{d-1}|}\int_0^1 r^{d+1}\left(\int_{B_2^d}|v + xr\sqrt{\eta}|^{p-4}\dd x\right)\dd r
 \]
(sweeping under the rug inessential regularity issues caused by the singularity at the origin, overcome e.g. by taking the smooth approximations $\Psi_\delta(x) = (|x|^2+\delta)^{p/2}$, $\delta \downarrow 0$, see \cite{BC}, eq. (1.5) for details). Writing the integral over $B_2^d$ in polar coordinates leads to
\begin{equation}\label{eq:g''}
g_v''(\eta) = \beta_{p,d}\int_0^1\int_0^1 \E_\xi |v + r_1r_2\sqrt{\eta}\xi|^{p-4} \dd r_1^d\dd r_2^{d+2}, \qquad \eta > 0,
\end{equation}
with 
\begin{equation}\label{eq:beta-def}
\beta_{p,d} = \frac{p(p-2)(p+d-2)(p+d-4)}{4d(d+2)}. 
\end{equation}
Note that for convenience, we have renormalised the double integral $\int_0^1\int_0^1$ so that it is against the probability measure on $[0,1]^2$ with density $d(d+2)r_1^{d-1}r_2^{d+1}$. 

Our argument lower bounding $g''$ now differs depending on whether $p > 4$.

\emph{Case 1: $2 \leq p \leq 4$.} Let $q = \frac{p-4}{2} \leq 0$. Here, we lean on the convexity of the function $x \mapsto x^q$ on $(0,+\infty)$. Rewriting \eqref{eq:g''} and using Jensen's inequality,
\begin{align}\notag
g_v''(\eta) &= \beta_{p,d}\int_0^1\int_0^1\E_\xi \big(|v|^2 + 2r_1r_2\sqrt{\eta}\scal{v}{\xi} + r_1^2r_2^2\eta\big)^q \dd r_1^d\dd r_2^{d+2} \\\label{eq:g''-lower-bd}
&\geq \beta_{p,d}\left(|v|^2 + \frac{d}{d+4}\eta\right)^q.
\end{align}
Plugging this back into \eqref{eq:Dp} and using a crude bound $\eta_{|Z|^2} \leq \max\{1,|Z|^2\}$, we arrive at
\[ 
D_p(1, v) \geq \frac{1}{2}\beta_{p,d}\E\left[(|Z|^2-1)^2\left(|v|^2 + \frac{d}{d+4}\max\{1,|Z|^2\}\right)^q\right].
 \]
To finish off with a clean bound, we use Jensen's inequality yet again with respect to the probability measure $\frac{(|Z|^2-1)^2}{2/d}\dd \mathbb{P}$ and obtain
\[ 
D_p(1, v) \geq  \frac{1}{d}\beta_{p,d}\left(|v|^2 + \frac{d}{d+4}\E\left[\max\{1,|Z|^2\}\frac{(|Z|^2-1)^2}{2/d}\right]\right)^q.
 \]
Finally, crudely $\max\{1,|Z|^2\} \leq 1 + |Z|^2$, thus
\[ 
\E\left[\max\{1,|Z|^2\}\frac{(|Z|^2-1)^2}{2/d}\right] \leq \frac{d}{2}\left(\E(|Z|^2-1)^2+\E\Big[|Z|^2(|Z|^2-1)^2\Big]\right) = 1 + \frac{d+4}{d},
 \]
where the last two expectations are calculated directly using that $d\cdot |Z|^2$ follows the $\chi^2(d)$ distribution. Consequently,
\[ 
D_p(1, v) \geq \frac{1}{d}\beta_{p,d} \left(|v|^2 + \frac{d}{d+4}+1\right)^q \geq \frac{1}{d}\beta_{p,d} \left(|v|^2 +2\right)^q.
 \]

\emph{Case 2: $p > 4$.} We simply use monotonicity asserted by the following immediate consequence of \eqref{eq:f'}.

\textbf{Claim.}
Let $q > 0$, $v \in \R^d$, $d \geq 2$. Then
\[
t \mapsto \E|v+\sqrt{t}\xi|^q \qquad \text{increases on $(0,+\infty)$.}
\]
In particular,
\begin{equation}\label{eq:monot-lower-bd}
\E|v+\sqrt{t}\xi|^q \geq |v|^q,
\end{equation}
and, thanks to rotational invariance,
\begin{equation}\label{eq:monot}
(v,t) \mapsto \E|v+\sqrt{t}\xi|^q = \E\big||v|\xi' + \sqrt{t}\xi\big|^q \quad  \text{increases both in $t$ and $|v|$.}
\end{equation}
Indeed, by \eqref{eq:f'},
\[ 
\frac{\dd}{\dd t}\E|v+\sqrt{t}\xi|^q = \frac{q(q+d-2)}{2|S^{d-1}|}\int_{B_2^d} |v+\sqrt{t}x|^{q-2} \dd x \geq 0. \qedhere
 \]
Therefore, using \eqref{eq:g''},
\[ 
g_v''(\eta) \geq \beta_{p,d}|v|^{p-4}
\]
(deterministically, for every $\eta > 0$). Plugging this pointwise bound into \eqref{eq:Dp} gives
\[ 
D_p(1, v) \geq \frac{1}{2}\E\Big[(|Z|^2-1)^2\Big]\beta_{p,d}|v|^{p-4}.
\]
Since
$
\E\Big[(|Z|^2-1)^2\Big] = \frac{2}{d},
$
we obtain $D_p(1,v) \geq \frac{1}{d}\beta_{p,d}|v|^{p-4}$, that is the lemma holds with
$
\kappa_{p,d} = \frac{1}{d}\beta_{p,d},
$
as desired.
\end{proof}

\subsection{An auxiliary lemma}

In order to handle the averages of the terms $\E|v|^{p-4}$ coming from the bound on $D_p$ in the case $p > 4$, with $v$ being sums of uniforms of spheres, we shall need some sort of concentration. For simplicity, we choose to use Khinchin-type inequalities (which in fact yields explicit and decent values of the constants involved). 

\begin{lemma}\label{lm:Khin}
Let $d \geq 2$. There is a universal constant $c_{\mathrm{Kh}} > 0$ such that for all real numbers $a_1, a_2, \dots$ and $q > 0$, we have
\[ 
\E\left|\sum_{j=1}^n a_j\xi_j\right|^q \geq c_{\mathrm{Kh}}\left(\sum_{j=1}^n a_j^2\right)^{q/2}.
 \]
One can take $c_{\mathrm{Kh}} = 0.77$. In particular, the same inequality holds if any of the variables $\xi_j$ is replaced by $Z_j$.
\end{lemma}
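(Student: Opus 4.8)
The plan is to prove the lower bound $\E\left|\sum_{j=1}^n a_j\xi_j\right|^q \geq c_{\mathrm{Kh}}\left(\sum_{j=1}^n a_j^2\right)^{q/2}$ by a two-regime argument in $q$, reducing by homogeneity to the case $\sum a_j^2 = 1$. The natural split is at $q = 2$: for $q \geq 2$ one can invoke Jensen's inequality (the function $x\mapsto x^{q/2}$ is convex) together with $\E\left|\sum a_j\xi_j\right|^2 = \sum a_j^2 = 1$, which actually gives the clean constant $c_{\mathrm{Kh}} = 1$ there; the work is entirely in the small-moment regime $0 < q < 2$.

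For $0 < q < 2$, I would use the standard moment-interpolation trick. Write $S = \sum_j a_j \xi_j$, so $\E|S|^2 = 1$. For any $0 < q < 2$, pick an exponent $r > 2$ and apply H\"older's inequality to the splitting $|S|^2 = |S|^{q\theta}|S|^{r(1-\theta)}$ with $\theta$ chosen so that $q\theta \cdot \frac{1}{s} + r(1-\theta)\cdot \frac{1}{s'} $ reproduces $\E|S|^2$; concretely, $\E|S|^2 \leq (\E|S|^q)^{\alpha}(\E|S|^r)^{1-\alpha}$ for the appropriate $\alpha = \frac{r-2}{r-q} \in (0,1)$. Rearranging gives
\[
\E|S|^q \geq \frac{(\E|S|^2)^{1/\alpha}}{(\E|S|^r)^{(1-\alpha)/\alpha}} = \left(\E|S|^r\right)^{-\frac{2-q}{r-2}},
\]
so it remains to upper bound $\E|S|^r$ for one convenient choice of $r$, say $r = 4$. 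But $\E|S|^4$ is controlled by the already-available sharp $L_p$–$L_2$ inequality \eqref{eq:LpL2} (applied with $p = 4$), or even more elementarily by a direct second-moment computation: expanding $|S|^4 = \left(\sum_{i,j} a_i a_j \scal{\xi_i}{\xi_j}\right)^2$ and using $\E\scal{\xi_i}{\xi_j} = \delta_{ij}$, $\E\scal{\xi_i}{\xi_j}^2 = \frac1d$ for $i\neq j$, one gets a bound of the form $\E|S|^4 \leq C_d$ with $C_d \leq 3$ for all $d \geq 2$ (indeed $\E|Z|^4 = 1 + \frac2d \leq 2$, and by \eqref{eq:LpL2} $\E|S|^4 \leq \E|Z|^4$). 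Feeding $\E|S|^4 \leq 2$ into the displayed inequality with $r=4$ yields $\E|S|^q \geq 2^{-\frac{2-q}{2}} \geq 2^{-1} = 0.5$ for $q \in (0,2)$, uniformly in $q, d, n$ and the $a_j$.

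To actually reach the stated constant $0.77$ rather than $0.5$, I would instead use a sharper uniform upper bound on a high moment. Since $\frac{2-q}{r-2}\to 0$ as $r\to\infty$, taking $r$ large and using the Khinchin upper bound $\E|S|^r \leq \E|Z|^r$ (from \eqref{eq:LpL2}) together with the explicit formula $\E|Z|^r = \frac{\Gamma((r+d)/2)}{(d/2)^{r/2}\Gamma(d/2)}$ — which for $d=2$ equals $(r/2)!$ and grows only like $(r/2)^{r/2}$ — gives $\left(\E|Z|^r\right)^{-\frac{2-q}{r-2}} \to 1$ as $r \to \infty$ for each fixed $q < 2$; a careful optimization over $r$ (worst case $q\downarrow 0$, $d=2$) produces a bound bounded below by $0.77$. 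I would present the estimate with an explicit choice such as $r = 2\lceil \log_2(1/\e)\rceil$-type, or simply verify numerically that $\inf_{0<q<2,\,d\ge 2}\sup_{r>2}\left(\E|Z|^r\right)^{-\frac{2-q}{r-2}} \geq 0.77$. The main obstacle is precisely this last quantitative step: getting a clean, verifiable constant above $0.77$ rather than a crude $0.5$ requires controlling the growth of the Gaussian moments $\E|Z|^r$ in $r$ uniformly over $d\geq 2$, so I would want to isolate the $d=2$ case (where the moments are largest relative to the normalization) and bound $(r/2)!^{\,2/(r-2)}$ explicitly, e.g. via Stirling, to certify that the infimum exceeds $0.77$.
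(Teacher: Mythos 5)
Your proposal handles $q\geq 2$ the same way the paper does (Jensen), but for $0<q<2$ you take a genuinely different route: instead of invoking the known sharp small-moment Khinchin constants $c_{d,q}=\min\{2^{-q/2}\E|\xi_1+\xi_2|^q,\E|Z|^q\}$ from \cite{CGT,Ko,KK} and lower-bounding them via Wendel's Gamma-function inequality, you interpolate $\E|S|^q\geq(\E|S|^r)^{-(2-q)/(r-2)}$ via H\"older and then bound $\E|S|^r\leq\E|Z|^r$ using \eqref{eq:LpL2}. That interpolation step and the $r=4$ computation (giving $0.5$) are correct. But the claim that this route can be pushed to $0.77$ has a genuine error, and I believe the approach simply cannot reach $0.77$ for $d=2$.

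The error: you write that ``$(\E|Z|^r)^{-\frac{2-q}{r-2}}\to 1$ as $r\to\infty$.'' This is false. For $d=2$, $\E|Z|^r=\Gamma(r/2+1)$, so by Stirling $(\E|Z|^r)^{1/(r-2)}\sim (r/(2e))^{1/2}\to\infty$, hence $(\E|Z|^r)^{-\frac{2-q}{r-2}}\to 0$ (not $1$) as $r\to\infty$. Large $r$ makes things \emph{worse}, because Gaussian moments grow super-exponentially in $r$. In fact, $\log\E|Z|^r$ is a convex function of $r$ vanishing at $r=2$, so the secant slope $\frac{\log\E|Z|^r}{r-2}$ is \emph{increasing} in $r$; the supremum of $(\E|Z|^r)^{-(2-q)/(r-2)}$ over $r>2$ is attained in the limit $r\to 2^+$, where the exponent tends to $\frac{\dd}{\dd r}\log\E|Z|^r\big|_{r=2}=\frac12\psi\!\left(\tfrac{d+2}{2}\right)-\frac12\log\tfrac{d}{2}$. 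For $d=2$ this equals $\frac12\psi(2)=\frac12(1-\gamma)\approx 0.2114$, so even the optimal $r$ gives only
\[
\sup_{r>2}\big(\E|Z|^r\big)^{-\frac{2-q}{r-2}} = e^{-(2-q)\cdot\frac12(1-\gamma)} \xrightarrow[q\to 0^+]{} e^{-(1-\gamma)}\approx 0.655,
\]
which is strictly below $0.77$. So the ``careful optimization over $r$'' you defer to does not certify $0.77$; the interpolation inequality is too lossy here (equality would require $|S|$ to be two-valued). The worst case for your method is exactly $d=2$, $q\downarrow 0$, where you top out around $0.655$. To fix this you would need either the sharp small-moment Khinchin result as in the paper's proof, or some substantially sharper input than \eqref{eq:LpL2} for the $L_r$ control.
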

\begin{proof}
When $q \geq 2$, the inequality plainly holds with constant $1$ (by Jensen's inequality). When $0 < q < 2$, let $c_{d,q}$ be the best constant such that the Khinchin-type inequality
\[ 
\E\left|\sum_{j=1}^n a_j\xi_j\right|^q \geq c_{d,q}\left(\sum_{j=1}^n a_j^2\right)^{q/2}
 \]
holds for all $n \geq 1$ and all scalars $a_j$. It is the main result of \cite{CGT, Ko, KK} that
\[ 
c_{d, q} = \min\{ 2^{-q/2}\E|\xi_1+\xi_2|^q, \E|Z|^q \}.
 \]
(see also \cite{CST}), and it is known that when $d \geq 3$, this minimum is attained at the second term, $\E|Z|^q$, which we now lower bound. We have,
\[
\E|Z|^q = \frac{\Gamma(\frac{q+d}{2})}{\left(\frac{d}{2}\right)^{q/2}\Gamma(\frac{d}{2})}.
\]
By the log-convexity of the Gamma function, for $x > 0$ and $0 < s < 1$,
\begin{equation}\label{eq:Wen}
\begin{split}
\Gamma(x+1) &= \Gamma\big(s(x+s) + (1-s)(x+s+1)\big) \\
&\leq \Gamma(x+s)^s\Gamma(x+s+1)^{1-s} 
= (x+s)^{1-s}\Gamma(x+s)
\end{split}
 \end{equation}
which is Wendel's inequality, \cite{Wen}, resulting in
\[ 
\frac{\Gamma(x+s)}{x^s\Gamma(x)} \geq \left(\frac{x}{x+s}\right)^{1-s}.
 \]
Applied to $x = \frac{d}{2} \geq 1$, $s = \frac{q}{2}$, we obtain
\[ 
\E|Z|^q \geq \frac{1}{(1+s)^{1-s}} \geq e^{-s(1-s)} \geq e^{-1/4} > 0.778.
 \]
It remains to lower bound the first term  $2^{-q/2}\E|\xi_1+\xi_2|^q$ when $d = 2$. We have,
\[ 
2^{-q/2}\E|\xi_1+\xi_2|^q = 2^{q/2}\frac{\Gamma(\frac{q}{2}+\frac{1}{2})}{\sqrt{\pi}\Gamma(\frac{q}{2}+1)}.
 \]
Again, by virtue of \eqref{eq:Wen}, applied this time with $x = \frac{q}{2}$, $s = \frac12$, we get
\[ 
2^{-q/2}\E|\xi_1+\xi_2|^q \geq \frac{2^{\frac{q+1}{2}}}{\sqrt{\pi(q+1)}}.
 \]
The right hand side is minimised at $q = \frac{1}{\log 2} - 1$ attaining value $0.774..$, which finishes the proof.
\end{proof}

\subsection{Proof of Theorem \ref{thm:main}}
We shall follow the traditional notation $\|a\|_p = (\sum |a_j|^p)^{1/p}$, $\|a\|_\infty = \max_j |a_j|$ for the $\ell_p$ norms of a vector $a = (a_1, \dots, a_n)$ in $\R^n$.

The proof uses two different arguments: when $\|a\|_\infty$ is bounded away from $1$, we shall (iteratively) use the pointwise bounds from Lemma \ref{lm:Dp} resulting with the deficit of the order $\|a\|_4$, whereas in the oppose case we easily get a constant deficit (i.e. independent of $a$), leveraging the Schur concavity of the moment functional. With hindsight, we choose the following cut-off for the $\ell_\infty$ norm,
\[ 
m_p = \begin{cases}
1, & 2 \leq p \leq 4,\\
\sqrt{1 - 2^{-\frac{1}{p-4}}}, & p > 4.
\end{cases}
 \]

\emph{Case 1: $\|a\|_\infty \leq m_p$.}
(Clarification: when $2 \leq p \leq 4$ this case is exhaustive, since $m_p = 1$.) We use the classical Lindeberg's swapping argument. To this end, we define
\begin{align*}
S_0 &= \sum_{j=1}^n a_j\xi_j, \\
S_k &= \sum_{j=1}^k a_jZ_j + \sum_{j=k+1}^n a_j\xi_j, \qquad k = 1, \dots, n
\end{align*}
and break the deficit up with a telescoping sum,
\[ 
\E|Z|^p - \E|S_0|^p = \sum_{k=1}^n \big(\E|S_{k}|^p - \E|S_{k-1}|^p\big).
 \]
Note that the sums $S_{k}$ and $S_{k-1}$ only differ by the $k$-th term which is $a_{k}Z_k$ and $a_{k}\xi_k$, respectively. Letting 
\[ 
v_k = \sum_{j=1}^{k-1} a_jZ_j + \sum_{j=k+1}^n a_j\xi_j
 \]
and using the notation from \eqref{eq:def-deficit}, we have
\[ 
\E|S_{k+1}|^p - \E|S_k|^p = \E D_p(a_k,v_k).
 \]
By Lemma \ref{lm:Dp}, 
\[ 
\E D_p(a_k,v_k) \geq \kappa_{p,d}a_k^4\begin{cases}
\E(|v_k|^2+2a_k^2)^{\frac{p-4}{2}}, & 2 \leq p \leq 4, \\
\E|v_k|^{p-4}, & p > 4.
\end{cases}
 \]
Observe that $\E|v_k|^2 = \sum_{j \neq k} a_j^2 = 1-a_k^2$. 

When $2 \leq p \leq 4$, Jensen's inequality yields
\[ 
\E D_p(a_k,v_k) \geq \kappa_{p,d}a_k^4(1 - a_k^2 + 2a_k^2)^{\frac{p-4}{2}} = \kappa_{p,d}a_k^4(1+a_k^2)^{\frac{p-4}{2}} \geq \frac12\kappa_{p,d}a_k^4.
 \]
Summing these bounds over $1 \leq k \leq n$ gives the result and finishes the proof.

When $p > 4$, Lemma \ref{lm:Khin} and a crude bound $1 - a_k^2 \geq 1 - \|a\|_\infty^2 \geq 1 - m_p^2 = 2^{-\frac{1}{p-4}}$ yield
\[ 
\E D_p(a_k,v_k) \geq c_{\mathrm{Kh}}\kappa_{p,d}a_k^4(1-a_k^2)^{\frac{p-4}{2}} \geq 2^{-1/2}c_{\mathrm{Kh}}\kappa_{p,d}a_k^4 > \frac{1}{2}\kappa_{p,d}a_k^4.
\]
Summing these bounds over $1 \leq k \leq n$ gives the result. 

\emph{Case 2: $\|a\|_\infty > m_p$.} 
Clarification: when $2 \leq p \leq 4$, $m_p = 1$, this case is empty, and the proof has already been completed, so we implicitly assume that $p > 4$. Here we simply use the Schur concavity of
\[ 
\R_+^n \ni x \mapsto \E\left|\sum_{j=1}^n \sqrt{x_j}\xi_j\right|^p
 \]
known from \cite{BC} to hold in every dimension $d \geq 2$ as long as $p \geq 2$. Say $a_1 = \|a\|_\infty$. Then the vector $(a_1^2, \dots, a_n^2)$  majorises the vector $(m_p^2, \frac{1-m_p^2}{n-1}, \dots,  \frac{1-m_p^2}{n-1})$, provided that $m_p^2 \geq  \frac{1-m_p^2}{n-1}$, equivalently $nm_p^2 \geq 1$, and we obtain
\[ 
\E\left|\sum_{j=1}^n a_j\xi_j\right|^p \leq \E\left|m_p\xi_1  +\sqrt{\frac{1-m_p^2}{n-1}}(\xi_2 + \dots + \xi_{n-1})\right|^p.
 \]
We can apply Case 1 to the right hand side, which results in
\[ 
\E\left|\sum_{j=1}^n a_j\xi_j\right|^p \leq \E|Z|^p - \frac{1}{2}\kappa_{p,d} \left( m_p^4 + \frac{(1-m_p^2)^2}{n-1}\right) \leq \E|Z|^p - \frac{1}{2}\kappa_{p,d}m_p^4
 \]
Note that $\|a\|_4 \leq \|a\|_2 = 1$. For cosmetics, say $m_p^4 = (1-2^{-1/(p-4)})^2 > \frac{1}{3p(p-2)}$ which gives the constant $c_{p,d}$ from the statement of the theorem. Finally, if $nm_p^2 < 1$, we take an integer $N \geq 2$ such that $N \geq \frac{1}{m_p^2}$ and observe that $(a_1^2, \dots, a_n^2, \underbrace{0, \dots, 0}_{N-n})$  majorises the vector $(m_p^2, \frac{1-m_p^2}{N-1}, \dots,  \frac{1-m_p^2}{N-1})$, so that we can repeat the last part of the argument verbatim to finish the proof. \hfill\qed

\subsection{Proof of Theorem \ref{thm:main-diag}}

Exactly as for Theorem \ref{thm:main}, the argument here will be driven by tracking the deficit along \emph{local} changes to the coefficient vector $a = (a_1, \dots, a_n)$ performed to make it progressively closer to the extremising diagonal one $(\frac{1}{\sqrt{n}}, \dots, \frac{1}{\sqrt{n}})$. The next lemma will facilitate that. We denote the deficit term from Theorem~\ref{thm:main-diag} by $\delta(a)$,
\[ 
\delta(a) = \sum_{j=1}^n \left(\frac{1}{n}-a_j^2\right)^2
 \]
and note that with the $\ell_2$ normalisation $\sum_{j=1}^n a_j^2 = 1$,
\begin{equation}\label{eq:delta-diag}
\delta(a) = \sum_{j=1}^n a_j^4 - \frac{1}{n} =  \sum_{j=1}^n a_j^4 - \sum_{j=1}^n\frac{1}{\sqrt{n}^4},
 \end{equation}
that is the deficit term can be equivalently derived by comparing the changes of the $\ell_4$ norms of the coefficient vectors.

\begin{lemma}\label{lm:diag-local}
Let $p \geq 2$. Suppose that  $a_1 \geq b_1 \geq b_2 \geq a_2 > 0$ with $a_1^2 + a_2^2 = b_1^2 + b_2^2 = \sigma^2$ for some $\sigma > 0$. For every vector $v$ in $\R^d$, $d \geq 2$, we have
\begin{align}\notag
&\E\left|b_1\xi_1 + b_2\xi_2 + v\right|^p - \E\left|a_1\xi_1 + a_2\xi_2 + v\right|^p \\\label{eq:diag-local}
&\qquad\qquad\geq \frac{1}{d}(a_1^4+a_2^4 - b_1^4-b_2^4)\cdot \begin{cases}\beta_{p,d}(|v|^2+\sigma^2)^{\frac{p-4}{2}}, & 2 \leq p \leq 4, \\ \frac{p(p-2)}{8}\E|v+\sigma\xi|^{p-4}, & p > 4, \end{cases}
\end{align}
with $\beta_{p,d}$ defined in \eqref{eq:beta-def}.
\end{lemma}
\begin{proof}
Recall that Lemma \ref{lm:2nd-derivative} provides us with good expressions for the derivatives of the function
\[ 
g_v(t) = \E|v + \sqrt{t}\xi|^p, \qquad t \geq 0
 \]
and this very function emerges naturally: thanks to independence and rotational invariance of the ensuing random vectors,
\begin{align*}
\E|a_1\xi_1 + a_2\xi_2 + v|^p = \E\big||a_1\xi_1 + a_2\xi_2|\xi + v\big|^p &= \E g_v\big(|a_1\xi_1+a_2\xi_2|^2\big) \\
&= \E g_v\big(\sigma^2 + 2a_1a_2\theta\big),
\end{align*}
where the last expectation is with respect to a random variable $\theta$  which has the same distribution as a one-dimensional marginal of $\xi$, say $\theta = \scal{\xi}{e_1}$. With $\sigma > 0$ fixed, we set
\[ 
h(u) = \E \big[g_v(\sigma^2 + 2u\theta)\big], \qquad u > 0.
 \]
Then the deficit of interest becomes
\begin{align*}
\Delta = \E\left|b_1\xi_1 + b_2\xi_2 + v\right|^p - \E\left|a_1\xi_1 + a_2\xi_2 + v\right|^p &= h(b_1b_2) - h(a_1a_2) \\
&= \int_{a_1a_2}^{b_1b_2} h'(u) \dd u.
\end{align*}
Plainly,
\[ 
h'(u) = 2\E\big[g_v'(\sigma^2 + 2u\theta)\theta\big].
 \]
To manoeuvrer this into a more convenient expression, we shall use an integration by parts formula for $\theta$.

\textbf{Claim.} Let $d \geq 2$ and $\xi, \tilde{\xi}$ be uniform on $S^{d-1}$, $S^{d+1}$ respectively. For random variables $\theta = \scal{\xi}{e_1}$, $\tilde \theta = \scal{\tilde \xi}{e_1}$ and a function $f$ differentiable on $(-1,1)$ such that $f(x)(1-x^2)^{\frac{d-1}{2}} \to 0$ as $x \to \pm 1$, we have
\begin{equation}\label{eq:theta-by-parts}
\E\big[f(\theta)\theta\big] = \frac{1}{d}\E\big[f'(\tilde \theta)\big].
\end{equation}
\begin{proof}
One can check that $\theta$ has density $\frac{1}{A_d}(1-x^2)^{\frac{d-3}{2}}\1_{(-1,1)}(x)$ with the normalising constant $A_d = \sqrt{\pi}\frac{\Gamma(\frac{d-1}{2})}{\Gamma(\frac{d}{2})}$, $d \geq 2$. Integration by parts yields
\begin{align*}
\E\big[f(\theta)\theta\big] &= \frac{1}{A_d}\int_{-1}^1 f(x)x(1-x^2)^{\frac{d-3}{2}} \dd x = \frac{1}{A_d}\int_{-1}^1 f(x)\Big(-\frac{1}{d-1}(1-x^2)^{\frac{d-1}{2}}\Big)' \dd x \\
&= -\frac{1}{(d-1)A_d}f(x)(1-x^2)^{\frac{d-1}{2}}\Bigg|_{-1}^1 + \frac{1}{(d-1)A_d}\int_{-1}^1 f'(x)(1-x^2)^{\frac{d-1}{2}} \dd x \\
&= \frac{A_{d+2}}{(d-1)A_d}\E\big[f'(\tilde \theta)\big]
\end{align*}
and $\frac{A_{d+2}}{(d-1)A_d} = \frac{1}{d}$, as claimed. \qedhere
\end{proof}

Applying the claim to $f(x) = g_v'(\sigma^2 + 2ux)$ gives
\[ 
h'(u) = \frac{2}{d}\E\Big[g_v''(\sigma^2 + 2u\tilde \theta)\cdot (2u)\Big]
 \]
where the expectation is over the distribution of $\tilde \theta$ from the statement of the claim. Moreover, evoking \eqref{eq:g''},
\[ 
g_v''(t) = \beta_{p,d}\E |v+R_1R_2\sqrt{t}\xi|^{p-4},
 \]
where, to compactify the notation, we let $(R_1,R_2)$ be random variables with joint density $\dd r_1^d\dd r_2^{d+2}$ on $(0,1)^2$.

Putting these together,
\[ 
h'(u) = \tilde\beta_{p,d}\E\big[|v+R_1R_2(\sigma^2 + 2u\tilde \theta)^{1/2}\xi|^{p-4}\big]\cdot (2u)
 \]
with 
\[
\tilde \beta_{p,d} = \frac{2}{d}\beta_{p,d}
\]
and the expectation taken over the product distribution of $R_1, R_2, \tilde \theta, \xi$.
Thus, we arrive at
\begin{align*}
\Delta = \tilde \beta_{p,d}\int_{a_1a_2}^{b_1b_2} \E\big[|v+R_1R_2(\sigma^2 + 2u\tilde \theta)^{1/2}\xi|^{p-4}\big]\cdot (2u)\dd u.
\end{align*}
As before, we now break the analysis into two cases depending on whether $p > 4$.

\emph{Case 1: $2 \leq p \leq 4$.} As in Lemma \ref{lm:Dp}, letting $q = \frac{p-4}{2}$ and using the point-wise bound \eqref{eq:g''-lower-bd}, we get
\[ 
\Delta \geq \tilde\beta_{p,d}\int_{a_1a_2}^{b_1b_2} (2u)\cdot \E\left[\left(|v|^2 + \frac{d}{d+4}(\sigma^2 + 2u\tilde\theta)\right)^q\right] \dd u.
 \]
Since $\E\tilde\theta = 0$, Jensen's inequality and a further simple cosmetic bound $\frac{d}{d+4}<1$ allow to lower-bound the expectation by $(|v|^2+\sigma^2)^q$ which results in
\[ 
\Delta \geq  \tilde\beta_{p,d}(|v|^2+\sigma^2)^q(b_1^2b_2^2-a_1^2a_2^2).
 \]
Finally, as a result of the constraint $a_1^2+a_2^2 = b_1^2+b_2^2$,
\[ 
0 = (b_1^2+b_2^2)^2 - (a_1^2+a_2^2)^2 = b_1^4+b_2^4 - (a_1^4+a_2^4) + 2(b_1^2b_2^2-a_1^2a_2^2),
 \]
so
\begin{equation}\label{eq:ab-prod-to-sum}
b_1^2b_2^2-a_1^2a_2^2 = \frac12(a_1^4+a_2^4-b_1^4-b_2^4)
\end{equation}
and we are done with the proof in this case with constant $\frac12 \tilde\beta_{p,d}$, as desired.

\emph{Case 2: $p > 4$.} We can crudely bound the expectation by restricting it to the positive values of $\tilde\theta$,
\begin{align*}
\Delta \geq \tilde \beta_{p,d}\int_{a_1a_2}^{b_1b_2} \E\big[|v+R_1R_2(\sigma^2 + 2u\tilde \theta)^{1/2}\xi|^{p-4}\1_{\{\tilde\theta > 0\}}\big]\cdot (2u)\dd u
\end{align*}
Thanks to \eqref{eq:monot}, used conditioning on the positive values of $R_1, R_2, \tilde\theta$,
\begin{align*}
\Delta &\geq  \tilde \beta_{p,d}\int_{a_1a_2}^{b_1b_2} \E\big[|v+R_1R_2\sigma\xi|^{p-4}\big]\cdot \p{\tilde\theta > 0}\cdot  (2u)\dd u \\
&= \frac{1}{2}\tilde \beta_{p,d}\E\big[|v+R_1R_2\sigma\xi|^{p-4}\big](b_1^2b_2^2-a_1^2a_2^2).
\end{align*}
Since $R_1R_2 \leq 1$ a.s., using \eqref{eq:monot} once more, we get a bound
\[ 
\E\big[|v+R_1R_2\sigma\xi|^{p-4}\big]  \geq \E\big[|R_1R_2v+R_1R_2\sigma\xi|^{p-4}\big] = \E(R_1R_2)^{p-4}\E\big[|v+\sigma\xi|^{p-4}\big].
 \]
Consequently, employing \eqref{eq:ab-prod-to-sum},
\[
\Delta \geq \frac{1}{4}\tilde\beta_{p,d}\E(R_1R_2)^{p-4}\E\big[|v+\sigma\xi|^{p-4}\big](a_1^4+a_2^4-b_1^4-b_2^4).
\]
By a direct calculation, $\E(R_1R_2)^{p-4} = \frac{d(d+2)}{(p-4+d)(p-2+d)}$, so after tracing the constants and simplifying, we have
\[ 
\frac{1}{4}\tilde\beta_{p,d}\E(R_1R_2)^{p-4} = \frac{p(p-2)}{8d}.\qedhere
 \]
\end{proof}

\begin{proof}[Proof of Theorem \ref{thm:main-diag}]
When $n=2$, we apply Lemma \ref{lm:diag-local} with $v=0$ directly to the coefficient vectors $b = (\frac{1}{\sqrt2}, \frac{1}{\sqrt2})$ and $a = (a_1, a_2)$ for which $\sigma = 1$. We get
\[ 
\E\left|\frac{1}{\sqrt2}\xi_1+\frac{1}{\sqrt2}\xi_2\right|^p - \E|a_1\xi_1 + a_2\xi_2|^p \geq \tilde c_{p,d}\delta(a),
 \]
as desired in view of \eqref{eq:delta-diag}, where
\[ 
\tilde c_{p,d} = \frac{1}{d}\cdot \begin{cases}\beta_{p,d}, & 2 \leq p \leq 4, \\ \frac{p(p-2)}{8}, & p > 4. \end{cases}
 \]
Now suppose that $n \geq 3$, $a_1 \geq \dots \geq a_n > 0$ and that $a \neq (\frac{1}{\sqrt{n}}, \dots, \frac{1}{\sqrt{n}})$ (in particular, $a_n < \frac{1}{\sqrt{n}}$). We follow a strategy from \cite{J}, namely we repetitively perform the following local operation on the coefficient vector $a$ until it becomes the diagonal vector $(\frac{1}{\sqrt{n}}, \dots, \frac{1}{\sqrt{n}})$: we take its largest coefficient $a_1$, the smallest one $a_n$, replace them with $a_1' = \sqrt{a_1^2+a_n^2 - \frac{1}{n}}$ and $a_n' = \frac{1}{\sqrt{n}}$, and finally rearrange the coefficients of $(a_1', a_2, \dots, a_{n-1}, a_n')$ to be nonincreasing, calling the resulting vector $b$. Since this operation strictly increases the number of coefficients equal to $\frac{1}{\sqrt{n}}$, after finitely many operations, we arrive at the diagonal vector $(\frac{1}{\sqrt{n}}, \dots, \frac{1}{\sqrt{n}})$. Plainly, this operation preserves the $\ell_2$ norm. This allows to apply Lemma \ref{lm:diag-local} (conditioning on the value of $v = \sum_{j=2}^{n-1} a_j\xi_j$) which yields the following bound on the deficit coming from one operation transforming $a$ to~$b$,
\begin{align*}
&\E\left|a_1'\xi_1 + a_n'\xi_2 + v\right|^p - \E\left|a_1\xi_1 + a_n\xi_2 + v\right|^p \\
&\qquad\qquad\geq \frac{1}{d}(a_1^4+a_n^4 - a_1'^4-a_n'^4)\cdot \begin{cases}\beta_{p,d}(|v|^2+\sigma^2)^{\frac{p-4}{2}}, & 2 \leq p \leq 4, \\ \frac{p(p-2)}{8}\E|v+\sigma\xi|^{p-4}, & p > 4, \end{cases}
\end{align*}
where, as in the lemma, $\sigma^2 = a_1^2 + a_n^2 = a_1'^2 + a_n'^2$.
Moreover, averaging over the distribution of $v$ gives
\begin{itemize}
\item when $2 \leq p \leq 4$, by Jensen's inequality,
\[ 
\E(|v|^2+\sigma^2)^{\frac{p-4}{2}} \geq \left(\E|v|^2 + \sigma^2\right)^{\frac{p-4}{2}} =  \left(\sum_{j=1}^n a_j^2\right)^{\frac{p-4}{2}} = 1,
 \]

\item  when $p > 4$, by Lemma \ref{lm:Khin} (Khinchin's inequality),
\[ 
\E|v+\sigma\xi|^{p-4} \geq c_{\mathrm{Kh}}\left(\sum_{j=1}^n a_j^2\right)^{p-4} = c_{\mathrm{Kh}}.
 \]
\end{itemize}
Notably, $a_1^4+a_n^4 - a_1'^4-a_n'^4 = \|a\|_4^4 - \|b\|_4^4$, thus adding the contributions over all operations, these $\ell_4$ deficit differences add over a telescoping-type sum, leading to the bound
\begin{align*}
\E\left|\sum_{j=1}^n \frac{1}{\sqrt{n}}\xi_j\right|^p - \E\left|\sum_{j=1}^n a_j\xi_j\right|^p \geq  \tilde{c}_{p,d}\left( \|a\|_4^4- \left\|\left(\frac{1}{\sqrt{n}}, \dots, \frac{1}{\sqrt{n}}\right)\right\|_4^4  \right) =  \tilde{c}_{p,d}\delta(a),
\end{align*}
with
\[ 
  \tilde{c}_{p,d} = \frac{1}{d}\begin{cases}\beta_{p,d}, & 2 \leq p \leq 4, \\ c_{\mathrm{Kh}}\frac{p(p-2)}{8}, & p > 4, \end{cases}
 \]
which after plugging in the value of $\beta_{p,d}$ from \eqref{eq:beta-def} finishes the proof.
\end{proof}


\end{document}